
\documentclass[11pt,reqno,tbtags]{amsart}
\usepackage{amssymb}
\usepackage{ifdraft}
\usepackage{url}
\usepackage[square,numbers]{natbib}

\textwidth 146mm \textheight 220mm \topmargin 5mm \evensidemargin
10mm \oddsidemargin 10mm

\overfullrule 0pt 


\numberwithin{equation}{section}

\allowdisplaybreaks


\newtheorem{theorem}{Theorem}[section]
\newtheorem{lemma}[theorem]{Lemma}

\newtheorem*{claim}{Claim}

\theoremstyle{definition}

\newtheorem{definition}[theorem]{Definition}
\newtheorem{definitions}[theorem]{Definitions}
\newtheorem{problem}[theorem]{Problem}
\newtheorem{remark}[theorem]{Remark}

\theoremstyle{remark}

\newenvironment{romenumerate}[1][0pt]{
\addtolength{\leftmargini}{#1}\begin{enumerate}
 }{\end{enumerate}}

\newcounter{oldenumi}
{\setcounter{oldenumi}{\value{enumi}}
\begin{romenumerate} \setcounter{enumi}{\value{oldenumi}}}
{\end{romenumerate}}

\newcounter{thmenumerate}
\newenvironment{thmenumerate}
{\setcounter{thmenumerate}{0}%
 \def\item{\par
 \refstepcounter{thmenumerate}\textup{(\roman{thmenumerate})\enspace}}
}
{}

\newcounter{romxenumerate}   

\newcounter{xenumerate}   

\newcommand{\refT}[1]{Theorem~\ref{#1}}

\newcommand{\refL}[1]{Lemma~\ref{#1}}
\newcommand{\refR}[1]{Remark~\ref{#1}}
\newcommand{\refS}[1]{Section~\ref{#1}}





\newcommand\REM[1]{{\raggedright\texttt{[#1]}\par\marginal{XXX}}}

\newenvironment{comment}{\setbox0=\vbox\bgroup}{\egroup} 

\begingroup
  \count255=\time
  \divide\count255 by 60
  \count1=\count255
  \multiply\count255 by -60
  \advance\count255 by \time
  \ifnum \count255 < 10 \xdef\klockan{\the\count1.0\the\count255}
  \else\xdef\klockan{\the\count1.\the\count255}\fi
\endgroup




\newcommand\set[1]{\ensuremath{\{#1\}}}

\newcommand\bigpar[1]{\bigl(#1\bigr)}
\newcommand\Bigpar[1]{\Bigl(#1\Bigr)}

\def\rompar(#1){\textup(#1\textup)}    

\newcommand\Bigparfrac[2]{\Bigpar{\frac{#1}{#2}}}

\def\xexp(#1){e^{#1}}
\newcommand\ceil[1]{\lceil#1\rceil}
\newcommand\floor[1]{\lfloor#1\rfloor}

\newcommand\ntoo{\ensuremath{{n\to\infty}}}

\newcommand\punkt{.\spacefactor=1000}    

\newcommand\ie{i.e\punkt}
\newcommand\eg{e.g\punkt}

\newcommand\cf{cf\punkt}

\newcommand\whp{w.h.p\punkt}

\newcommand{\tend}{\longrightarrow}

\newcommand\pto{\overset{\mathrm{p}}{\tend}}

\newcounter{CC}
\newcommand{\CC}{\stepcounter{CC}\CCx} 
\newcommand{\CCx}{C_{\arabic{CC}}}     
\newcommand{\CCdef}[1]{\xdef#1{\CCx}}     
\newcounter{cc}

\newcommand\E{\operatorname{\mathbb E{}}}
\renewcommand\P{\operatorname{\mathbb P{}}}
\newcommand\Var{\operatorname{Var}}
\newcommand\Cov{\operatorname{Cov}}

\newcommand\gD{\Delta}

\newcommand\gam{\gamma}

\newcommand\gl{\lambda}

\newcommand\eps{\varepsilon}

\renewcommand\phi{\xxx}  

\newcommand\cA{\mathcal A}

\newcommand\cM{\mathcal M}

\newcommand\qw{^{-1}}
\newcommand\qww{^{-2}}

\renewcommand{\=}{=}

\renewcommand\ij{_{ij}}
\newcommand\ijto[1]{_{i,j=1}^{#1}}
\newcommand\tp{T^{\mathrm p}}
\newcommand\ts{T^{\mathrm s}}
\newcommand\tps{T^{\mathrm {ps}}}
\newcommand\tx{T^{*}}
\newcommand\xxcm[2]{$(#1,#2)$-corner matrix}
\newcommand\xxcmm[2]{$(#1,#2)$-corner matrices}
\newcommand\xxcsm[2]{$(#1,#2)$-corner submatrix}
\newcommand\xxc[2]{$(#1,#2)$-corner}
\newcommand\klcm{\xxcm{k}{\ell}}
\newcommand\klcmm{\xxcmm{k}{\ell}}
\newcommand\klcsm{\xxcsm{k}{\ell}}
\newcommand\mlc{\xxc{m}{\ell}}
\newcommand\logq{\log_Q}
\newcommand\yml{Y_{m,\ell}}
\newcommand\qa[1]{\ensuremath{#1\times#1}}
\newcommand\iii[2]{\ensuremath{[#1,#2]}}
\newcommand\imx{I_{M'}}




\hyphenation{Upp-sala}

\newcommand\urladdrx[1]{{\urladdr{\def~{{\tiny$\sim$}}#1}}}

\begin{document}
\title[Superboolean rank and the largest triangular submatrix]
{Superboolean rank and the size of the largest triangular submatrix of a
random matrix}

\date{2 September, 2011}

\author{Zur Izhakian}
\address{School of Mathematical Sciences, Tel Aviv
   University, Ramat Aviv,  Tel Aviv 69978, Israel.
\vskip0.001pt Department of Mathematics, Bar-Ilan University,
Ramat-Gan 52900, Israel.} \email{zzur@math.biu.ac.il}

\thanks{The research of the first author has  been  supported  by the
Israel Science Foundation (ISF grant No.  448/09) and  by the
Oberwolfach Leibniz Fellows Programme (OWLF), Mathematisches
Forschungsinstitut Oberwolfach, Germany.}

\author{Svante Janson}
\address{Department of Mathematics, Uppsala University, PO Box 480,
SE-751~06 Uppsala, Sweden}
\email{svante.janson@math.uu.se}
\urladdrx{http://www2.math.uu.se/~svante/}

\author{John Rhodes}
\address{Department of Mathematics, University of California, Berkeley,
970 Evans Hall \#3840, Berkeley, CA 94720-3840 USA.}
\email{blvdbastille@aol.com;rhodes@math.berkeley.edu}

\thanks{\textbf{Acknowledgement:}
  This work was started during a chance meeting of researchers from two
different groups at a supper table in
Mathematisches Forschungsinstitut Oberwolfach (MFO), Germany in April 2011,
and the work was essentially completed during the authors' stay at MFO.
We thank other MFO visitors, in particular Gabor Lugosi, for helpful comments.
}

\subjclass[2000]
{03G05, 06E25, 06E75, 60C05} 

\begin{comment}  
05 Combinatorics
05C Graph theory [For applications of graphs, see 68R10, 90C35, 94C15]
05C05 Trees
05C07 Vertex degrees
05C35 Extremal problems [See also 90C35]
05C40 Connectivity
05C65 Hypergraphs
05C80 Random graphs
05C90 Applications
05C99 None of the above, but in this section

60 Probability theory and stochastic processes
60C Combinatorial probability
60C05 Combinatorial probability

60F Limit theorems [See also 28Dxx, 60B12]
60F05 Central limit and other weak theorems
60F17 Functional limit theorems; invariance principles

60G Stochastic processes
60G09 Exchangeability
60G55 Point processes

\end{comment}

\begin{abstract}
We explore the size of the largest (permuted)
triangular submatrix of a random matrix, and more precisely its
asymptotical behavior as the size of
the ambient matrix tends to infinity. The importance of such
permuted triangular submatrices arises when dealing with certain
combinatorial algebraic settings in which these submatrices
determine the rank of the ambient matrix, and thus attract a
special attention.
\end{abstract}

\maketitle


\section{Introduction}\label{S:intro}

Let $X=(x\ij)_{i,j=1}^n$ be a random $n\times n$ matrix.
We assume that the entries of $X$ are taken from some set $\cA$
and that they are independent and identically distributed, with
$\P(x\ij=a)=p_a$
for some fixed probabilities $p_a$, $a\in\cA$.
We assume further that $0,1\in\cA$ and $p_0,p_1>0$.

The purpose of the present paper is to study the size of the
largest triangular submatrix of $X$, and more precisely
its asymptotical behavior as \ntoo. We actually consider four
versions of this problem; it turns out that to the first order
studied here, they all have the same answer.

\begin{definitions}
  \begin{thmenumerate}
  \item
A \emph{submatrix} of a matrix $A=(a\ij)_{i\in M,j\in N}$ is any
matrix obtained by deleting rows  and/or columns of $A$. In other
words, it is a matrix $(a\ij)_{i\in I,j\in J}$ for a non-empty set
of rows $I \subseteq M$ and a non-empty set of columns $J
\subseteq N$. (We preserve the order of the rows and columns in
$I$ and $J$.)

\item
A \emph{permutation} of a matrix is a matrix obtained by a
permutation of the rows and a (possibly different) permutation of the columns.
In particular, a \emph{permuted submatrix} of $(a\ij)$ is
$(a_{i_rj_s})_{r,s=1}^{k,\ell}$
for a sequence of distinct rows $i_1,\dots,i_k$ and
a sequence of distinct columns $j_1,\dots,j_\ell$.
\item
A \emph{(lower) triangular matrix} is a square matrix $(a\ij)\ijto
m$ such that $a\ij=0$ when ~$i<j$.
\item
A  \emph{special triangular matrix} is a square matrix
$(a\ij)\ijto m$ such that $a\ij=0$ when $i<j$ and $a_{ij}=1$ when
~$i=j$. (The remaining entries are arbitrary.)
  \end{thmenumerate}
\end{definitions}

Note that a $k\times\ell$ submatrix is determined by two \emph{sets} $I,J$
of indices with $|I|=k$, $|J|=\ell$, while a permuted submatrix is
determined by two
\emph{sequences}  $i_1,\dots,i_k$ and $j_1,\dots,j_\ell$ of indices, with each
sequence without repetitions.

We define the random variable $T_n$ as the maximal size (= number of rows,
or columns)
of a submatrix of
$X$ that is triangular; similarly
$\ts_n$, $\tp_n$, $\tps_n$ are the maximal sizes of a submatrices that are
special triangular, permuted triangular and permuted special triangular,
respectively. Equivalently, $\tp_n$ $[\tps_n]$
is the maximal size of a permuted submatrix of $X$
that is [special] triangular.
Note that
\begin{align}\label{tineq}
  \ts_n\le T_n\le\tp_n
\qquad\text{and}\qquad
  \ts_n\le \tps_n\le\tp_n.
\end{align}


The general motivation  for studying these quantities comes from
boolean algebra or, more generally, from (tropical) max-plus
algebra \cite{ABG} and supertropical algebra \cite{Iz}.
 These algebras take place over semirings and are fundamentally connected to graph theory, in particular matrices over these
 semirings   correspond uniquely to weighted directed graphs.
With  this correspondence, basic algebraic notions are naturally
substituted by combinatorial ones;
 for example, the role of the determinant is replaced  by the
 permanent. These combinatorial analogous also help to bypass the lack of negation in the
 ground semirings.
 As a consequence, computational complexity,  such as computing the rank of a
 matrix, is not always polynomial and
 could be NP-complete
\cite{Kim}  over this framework.

The specific motivation occurs if one considers either the boolean
case ($\cA=\set{0,1}$) or the superboolean case
($\cA=\set{0,1,1^\nu}$), the simplest example for a supertropical
semiring \cite{IRmatroid,IzhakianRowen2007SuperTropical}. These
papers lead to a new algebraic theory of combinatorics by
representing matroids by boolean matrices. In this theory, a
square matrix is non-singular if and only if it is permuted
special triangular, and the rank of a matrix is thus the maximal
size of a permuted special triangular submatrix, see
\citet{IRmatroid,IRmatroidII,IRLattice} for details. Consequently,
the rank of the random matrix $X$ is $\tps_n$.

\begin{theorem}
  \label{T1}
Let $Q\=1/p_0>1$, and let $\tx_n$ be any of
$T_n,\ts_n,\tp_n,\tps_n$. Then, as~\ntoo,
\begin{equation}\label{t1}
  \tx_n/\logq n\pto 2+\sqrt2,
\end{equation}
where $\pto$ denotes convergence in probability.
\end{theorem}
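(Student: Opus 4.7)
By the inequalities in \eqref{tineq}, it suffices to prove the upper bound $\tp_n/\logq n\le 2+\sqrt{2}+\eps$ and the lower bound $\ts_n/\logq n\ge 2+\sqrt{2}-\eps$ with probability tending to~$1$ for every fixed $\eps>0$; the claim for all four variants then follows by squeezing.

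For the lower bound on $\ts_n$ I would apply the second-moment method to $Z_k$, the number of pairs $(I,J)$ of $k$-element subsets of $[n]$ for which the submatrix $X[I,J]$, with rows and columns in their natural order, is special triangular; note that $\set{\ts_n\ge k}=\set{Z_k\ge 1}$. A direct computation gives $\E Z_k=\binom{n}{k}^2 p_0^{\binom{k}{2}} p_1^k$, and for $k=c\logq n$ one has $\log\E Z_k\sim(2c-c^2/2)(\log n)^2/\log Q$, so $\E Z_k\to\infty$ whenever $c<4$. The heart of the argument is the second-moment estimate $\E Z_k^2=(1+o(1))(\E Z_k)^2$ for $c<2+\sqrt{2}$. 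One decomposes $\E Z_k^2$ over pairs $(I,J),(I',J')$ by the overlap sizes $a=|I\cap I'|$ and $b=|J\cap J'|$, and within each $(a,b)$ by the relative rank patterns of the shared rows and columns, which together determine whether the special-triangular constraints from the two submatrices are compatible on the shared $a\times b$ block and, if so, how many $0$'s and $1$'s they jointly impose. Careful bookkeeping should show that for $c<2+\sqrt{2}$ the contributions from $(a,b)\ne(0,0)$ are collectively $o((\E Z_k)^2)$; the constant $2+\sqrt{2}$ arises as the critical value at which the dominant overlap contribution, optimized over the fraction of shared rows/columns, first balances the independent-pair contribution. Chebyshev's inequality then gives $Z_k\ge 1$ whp.

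For the upper bound on $\tp_n$, the naive Markov bound $\P(\tp_n\ge k)\le (n)_k^2 p_0^{\binom{k}{2}}$ is $o(1)$ only when $k>4\logq n$, which is too weak. To reach the sharp constant the plan is to exploit the massive overcount inherent in counting ordered witnesses, since each $k\times k$ permuted triangular submatrix is typically realized by many distinct pairs of row and column orderings. One restricts attention to a canonical witness per submatrix (for instance, the ordering produced by a greedy or lexicographic rule applied to rows and then columns), verifies that such a canonical witness exists whenever $\tp_n\ge k$, and estimates a refined expected count over these canonical witnesses; executed carefully this matches the threshold $c=2+\sqrt{2}$ produced by the second-moment side.

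The main obstacle is this upper-bound refinement: the naive first moment is loose by the constant factor $4/(2+\sqrt{2})\approx 1.17$, and closing the gap requires identifying a canonical structure whose expected count is both tractable and tight enough to reach $2+\sqrt{2}$. The lower-bound second-moment computation, while combinatorially delicate, follows a standard pattern and produces the constant naturally from a quadratic optimization over the overlap sizes.
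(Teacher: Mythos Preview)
Your upper-bound plan contains a genuine gap. You diagnose the looseness of the naive first moment as an overcount coming from \emph{multiple orderings} of the same row/column sets, and propose to fix it by picking a canonical ordering. But that overcount is only a factor of at most $(k!)^2=\exp(O(k\log k))$, which is negligible against the $\exp(\Theta((\log n)^2))$ gap between the $c=4$ and $c=2+\sqrt2$ thresholds; indeed replacing $(n)_k^2$ by $\binom{n}{k}^2$ in the first-moment bound leaves the critical constant at~$4$. The true overcount is of a different nature: large triangular submatrices occur in huge clusters because many distinct $(I,J)$ share the same ``hard'' top-right block. The paper's device is to introduce \emph{$(k,\ell)$-corner matrices}---the $\ell\times\ell$ upper-right block of a $k\times k$ triangular matrix---and to apply the first moment to corners rather than to full triangular matrices. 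Since every permuted triangular $k\times k$ submatrix contains a permuted $(k,\ell)$-corner for any $\ell\le k$, one gets $\P(\tp_n\ge k)\le\E Y_{k,\ell}$; optimising over $\ell$ (the optimum is $\ell\approx k-\logq n$) yields the sharp constant $2+\sqrt2$.

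Your lower-bound plan is different from the paper's and is, at best, delicate. The same clustering phenomenon means that when $Z_k>0$ it is typically enormous, and the second-moment sum over overlap patterns must absorb this; in particular the dangerous overlaps are exactly those where $M$ and $M'$ share a large corner, and one has to show these contribute $o((\E Z_k)^2)$ uniformly over the position and size of the shared block. Your sketch (``careful bookkeeping should show\dots'') does not engage with this. The paper sidesteps the issue entirely: it runs the second-moment method on (a block version of) $(k,\ell)$-corner submatrices with $k\approx(2+\sqrt2)\logq n$ and $\ell\approx(1+\sqrt2)\logq n$, where the variance is tractable, and then shows by a separate greedy argument that any such corner in the lower-left quarter of $X$ can \whp{} be extended to a full $k\times k$ special triangular submatrix by adding $k-\ell\approx\logq n$ rows and columns one at a time. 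The corner matrix is thus the single idea that unlocks both directions, and it is missing from your proposal.
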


We say that an event occurs \emph{with high probability (\whp)} if its
probability tends to 1 as \ntoo.
Recall that, by the definition of convergence in probability, \eqref{t1}
says that for any $\eps>0$, \whp
\begin{equation}\label{t1x}
  (2+\sqrt2-\eps)\logq n < \tx_n <(2+\sqrt2+\eps)\logq n.
\end{equation}
By \eqref{tineq}, it suffices to prove the upper inequality for $\tp_n$ and
the lower for $\ts_n$.
The upper inequality is proved in \refS{Supper} and the lower in \refS{Slower};
the proofs are based on the first and second moment methods. (See \eg{}
\cite[p.~54]{JLR} for a general description of these methods.)

\begin{remark}
The corresponding problem of the largest square submatrix with
only 0's (or, equivalently, after interchange of 0 and 1, with
only 1's) has been studied by several authors, see \cite{SN} and
the references therein. It is shown in \cite{SN} that if $S_n$ is
the size of the largest such matrix, then $S_n/\logq n\pto 2$.
This problem can be seen as finding the largest balanced complete
subgraph of a random bipartite graph; the analogous problem of
finding the largest complete set in a random graph $G(n,p)$ (or,
equivalently, the largest independent set in $G(n,1-p)$) was
solved by \citet{BollErd76} and \citet{Mat76}, see also
\cite{Bollobas} and \cite{JLR}; again the size, $C_n$ say, is
asymptotically $2\logq n$, where $Q=1/p$.

Note that
$T_n\ge S_n \ge \floor{\tp_n/2}\ge\floor{T_n/2}$,
which shows that $T_n$, $\tp_n$ and $S_n$ are equal within a factor of
$2+o(1)$, and in particular
of the same order of magnitude. However, it does not seem possible to get
the right constant in front of $\logq n$
for one of these problems from the other.

For the largest square zero submatrix and the largest cliques in
$G(n,p)$, much
more precise estimates are known, see
\cite{SN} and \cite{Bollobas, JLR};
for example, it follows that if
\begin{equation*}
 s(n)=2\logq n-2\logq\logq n +2\logq(e/2),
\end{equation*}
 then
for any $\eps>0$,  $\floor{s(n)-\eps} \le S_n\le \floor{s(n)+\eps}$
and
$\floor{s(n)+1-\eps} \le C_n\le \floor{s(n)+1+\eps} $
\whp\ (and, in fact, almost surely);
in particular
the sizes are concentrated on one or at most two values.
It would be interesting to find similar sharper versions of the result
above, which leads to the following open problems.
\end{remark}

\begin{problem}
Find second order terms for
$T_n,\ts_n,\allowbreak \tp_n,\tps_n$, and
if possible even sharper estimates,
and see if
they differ between the four versions.
In particular, what are the orders of the
differences $\tp_n-T_n$, $T_n-\ts_n$, \dots?
\end{problem}

\begin{problem}
Are
the quantities $T_n,\ts_n,\allowbreak \tp_n,\tps_n$  concentrated on at
most two values each?
\end{problem}

\begin{problem}
Prove a version of \refT{T1} (or a stronger result) with convergence almost
surely instead of just in probability,
seeing $X_n$ as submatrices of an infinite random matrix in the natural way.
\end{problem}

\begin{problem}
Find corresponding results when $p_0$ and $p_1$ depend on $n$. The
case when $p_0$ tends to $1$ (not too fast) seems to be the most
interesting.
\end{problem}

\begin{remark}
  We consider for simplicity only square matrices $X$, but the definitions
  extend to general $m\times n$ matrices.
Since the quantities $T_n,\ts_n,\allowbreak \tp_n,\tps_n$
are monotone if we add rows
or columns, the result of \refT{T1} holds as long as $\log m/\log n\to1$;
this includes for example the case $m/n\to c\in(0,\infty)$. We have not
investigated
other cases such as $m=n^\gam$ for some $\gam>0$.
\end{remark}


\subsection{Notation}
We let $\floor x$
and $\ceil x$ denote the largest and smallest integers such that
$\floor x \le x \le \ceil x$.
We write $[m,n]$ for the interval \set{m,m+1,\dots,n} of integers between
$m$ and $n$.
Further,
$\log $ denotes the natural logaritm $\log_e$; recall that $\logq n=\log
n/\log Q$.

\section{Proof of upper bound}\label{Supper}

As said above, it suffices to show that
$\tp_n\le(2+\sqrt2+\eps)\logq n$ \whp{} for every $\eps>0$. We
will use the first moment method, \ie, show that a suitable
expectation tends to~0. However, for reasons discussed below, we
will not obtain the right constant by calculating the expected
number of (permuted) triangular submatrices of $X$. Instead we
consider the following type of submatrices.

\begin{definition}
  Let $1\le\ell\le k$. A \klcm{} is an $\ell\times\ell$ matrix
  $(a\ij)\ijto\ell$ such that
  \begin{equation}\label{corner}
a\ij=0 \qquad\text{if}\qquad i<j+k-\ell;
  \end{equation}
if further $a\ij=1$ when $i=j+k-\ell$, the matrix is a \emph{special \klcm}.
\end{definition}
Thus the $\ell\times\ell$ submatrix
in the upper right corner of a [special]
lower \qa{k} triangular matrix is a [special] \klcm, and conversely. Note
that if
$\ell\le k/2$, then a \klcm{} is 0, and if $\ell=k$ then a \klcm{} is the
same as a
triangular matrix.

Let $\nu_0(k,\ell)$ be the number of entries required to be 0 by
\eqref{corner}. Thus $\nu_0(k,\ell)=\ell^2$ when $\ell\le k/2$; for $\ell\ge
k/2$ we have
\begin{equation}
  \label{cornu}
  \begin{split}
\nu_0(k,\ell) &=\sum_{j=1}^\ell\max(j+k-\ell-1,\ell)
=\sum_{j=1}^{2\ell-k}(j+k-\ell-1) +\sum_{j=2\ell-k+1}^\ell\ell
\\&
=
\frac{(2\ell-k)(2\ell-k+1)}2 + (2\ell-k)(k-\ell-1)+(k-\ell)\ell
\\&
=\frac{4k\ell-k^2-2\ell^2+k-2\ell}2.
  \end{split}
\raisetag\baselineskip
\end{equation}
Similarly, let  $\nu_1(k,\ell)$ be the number of entries required to be 1 in
a special \klcm. Thus $\nu_1(k,\ell)=0$ when $\ell\le k/2$
and $\nu_1(k,\ell)=2\ell-k$ when $\ell\ge k/2$.
Further, let $\nu(k,\ell)\=\nu_0(k,\ell)+\nu_1(k,\ell)$ be the total number
of fixed entries in a special \klcm. If $\ell\ge k/2$, then by \eqref{cornu}
\begin{equation}  \label{cornu+}
  \nu(k,\ell)
=\frac{4k\ell-k^2-2\ell^2-k+2\ell}2.
\end{equation}

Let $1\le\ell\le m$ and let $Y_{m,\ell}$ be the number of permuted
\mlc{} submatrices in~$X$. Note that if $X$ contains a permuted
triangular $m\times m$ submatrix $A$, then a suitable submatrix of
$A$ is a permuted \mlc{} submatrix of $X$. Hence, if $\tp_n\ge m$,
then $Y_{m.\ell}\ge1$, and Markov's inequality yields
\begin{equation}
  \label{b3a}
\P(\tp_n\ge m)\le\P(Y_{m,\ell}\ge1)\le \E Y_{m,\ell} \ .
\end{equation}
The expected value $\E Y_{m,\ell}$ is easily computed.
The number of permuted $\ell\times\ell$ submatrices of $X$ is
$(n)_\ell\cdot(n)_\ell$, where $(n)_\ell\=n(n-1)\dotsm(n-\ell+1)$, and for
each such matrix, the probability that it is triangular is
$p_0^{\nu_0(m,\ell)}$, with $\nu_0(m,\ell)$ given above.
Thus,
\begin{equation}
  \label{b3}
\E \yml
= (n)_\ell^2\cdot p_0^{\nu_0(m,\ell)}
\le \exp\bigpar{2\ell\log n-\log Q\cdot\nu_0(m,\ell)}.
\end{equation}
Taking $m=\ceil{s\log n}$ and $\ell=\ceil{t\log n}$ for some fixed $s$ and
$t$ with $s/2<t\le s$, we have by \eqref{b3} and \eqref{cornu},
\begin{equation}\label{b3b}
  \E\yml \le \exp \bigpar{2t(\log n)^2-\log Q\cdot(2st-s^2/2-t^2)(\log
    n)^2+O(\log n)}.
\end{equation}
We see from \eqref{b3b} that if we choose $s$ and $t$ such that $s/2<t\le s$
and
\begin{equation}
  \label{b3c}
2t-\log Q\cdot(2st-s^2/2-t^2)<0,
\end{equation}
then $\E\yml\to0$ and thus by \eqref{b3a}
\begin{equation}
  \label{b3d}
\P(\tp_n\ge s\log n) =\P(\tp_n\ge m) \le\E\yml\to0;
\end{equation}
hence $\tp_n< s\log n$ \whp

Write for convenience $\gam\=1/\log Q$. The left hand side of
\eqref{b3c} is, for fixed~$s$, maximized when $t=s-\gam$, and then
its value is, by a short calculation,
\begin{equation*}
  2s-\gam-\frac{s^2}{2\gam}
=-\frac{s^2-4s\gam+2\gam^2}{2\gam}
=-\frac{(s-2\gam)^2-2\gam^2}{2\gam},
\end{equation*}
which is negative for $s>2\gam+\sqrt2\gam$.
Consequently, taking any $s>(2+\sqrt2)\gam$ and $t=s-\gam$, which clearly
satisfies $s/2<t<s$, \eqref{b3d} yields
$\tp_n< s\log n$ w.h.p.
It remains only to note that $\gam\log n=\log n/\log Q=\logq n$.

\begin{remark}\label{Rtribad}
  If we instead estimate the number of (permuted) triangular submatrices, we
are taking $\ell=m$ and $t=s$ in the calculations above and we
only obtain the weaker estimate $\tp_n\le(4+\eps)\logq n$ w.h.p.
The reason that the first moment method does not yield a sharp
estimate in this case is that triangular submatrices of large size
tend to occur in large clusters; thus the expected number of such
submatrices of a given size can tend to infinity although the
probability that the number is nonzero tends to~0. See also the
proof of the lower bound in \refS{Slower}, which shows that a
\klcm{} of close to maximal size \whp{} can be extended to a
triangular submatrix in many different ways.
\end{remark}

\section{Proof of lower bound}\label{Slower}

We begin by stating three lemmas; the first is elementary and the
two others contain the main probabilistic arguments. The proofs
are provided  later.

\begin{lemma}
  \label{LL0}
Suppose that $k_1\ge \ell_1\ge1$, $k_2\ge \ell_2\ge1$,  and
$2(\ell_1-\ell_2)\ge k_1-k_2\ge 0$. Then every special  \xxcm{k_1}{\ell_1}
contains a special
\xxcsm{k_2}{\ell_2}.
\end{lemma}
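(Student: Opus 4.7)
The plan is to exhibit the desired special \xxcsm{k_2}{\ell_2} inside the ambient special \xxcm{k_1}{\ell_1} $A=(a_{ij})_{i,j=1}^{\ell_1}$ by extracting two \emph{contiguous} blocks of rows and columns. Concretely, I will pick row indices $i_r = r + c$ and column indices $j_s = s + c'$ for $r, s = 1, \dots, \ell_2$, where $c, c' \ge 0$ are integer shifts to be determined.

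The key observation is that the defining corner conditions depend only on the difference $i - j$ compared with $k - \ell$. With the ansatz above, $i_r - j_s = (r - s) + (c - c')$, so by imposing $c - c' = (k_1 - \ell_1) - (k_2 - \ell_2)$ the inequality $r - s < k_2 - \ell_2$ becomes equivalent to $i_r - j_s < k_1 - \ell_1$, and the equality $r - s = k_2 - \ell_2$ becomes equivalent to $i_r - j_s = k_1 - \ell_1$. The defining properties of $A$ then automatically place a $0$ or a $1$ in the correct positions of the extracted block, so that it is a special \xxcm{k_2}{\ell_2} regardless of the values of the unconstrained entries.

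The only remaining point is the existence of admissible shifts: I need $0 \le c, c' \le \ell_1 - \ell_2$ together with the prescribed value of $c - c'$. Writing $D = k_1 - k_2$ and $E = \ell_1 - \ell_2$, the required difference is $D - E$, and the hypothesis $0 \le k_1 - k_2 \le 2(\ell_1 - \ell_2)$ is precisely what forces $D - E \in [-E, E]$. So I can take $(c, c') = (D - E, 0)$ when $D \ge E$ and $(c, c') = (0, E - D)$ when $D \le E$, and both shifts land in $[0, E]$, ensuring $i_r, j_s \in [1, \ell_1]$. This is the only place where the full hypothesis is used; the argument is otherwise just elementary index bookkeeping, and I anticipate no serious obstacle beyond verifying that the case split on $D$ versus $E$ indeed exhausts the range allowed by the hypothesis.
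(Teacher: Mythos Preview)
Your argument is correct. The required difference $c-c'=(k_1-\ell_1)-(k_2-\ell_2)=D-E$ indeed converts the corner condition for $(k_2,\ell_2)$ into the corner condition for $(k_1,\ell_1)$ entrywise, and your case split on the sign of $D-E$ produces admissible shifts in $[0,E]$ exactly under the hypothesis $0\le D\le 2E$.

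The paper takes a different route: it proves the lemma by induction on $\ell_1-\ell_2$, observing that from a special $(k,\ell)$-corner matrix one obtains a special $(k-2,\ell-1)$-, $(k-1,\ell-1)$-, or $(k,\ell-1)$-corner matrix by deleting, respectively, the first row and last column, the last row and last column, or the last row and first column. Repeating these three moves $\ell_1-\ell_2$ times lets one decrease $k$ by any amount between $0$ and $2(\ell_1-\ell_2)$. Your proof is a direct, one-shot construction that bypasses the induction and hands back the submatrix explicitly; the paper's version instead makes transparent the three elementary operations that preserve the special corner structure. Both are elementary; yours is slightly more constructive, while the paper's is a bit more modular.
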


\begin{lemma}\label{LL1}
  Let $\eps>0$.
There exists some $k=k(n)$ and   $\ell=\ell(n)$ with
\begin{align*}
(2+\sqrt 2-\eps)\logq n&\le k\le (2+\sqrt2)\logq n,
\intertext{and}
(1+\sqrt 2-\eps)\logq n&\le \ell\le (1+\sqrt2)\logq n
\end{align*}
such that \whp{} $X$ contains a special \klcsm.
\end{lemma}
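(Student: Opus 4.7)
The approach is the second moment method. Let $Z = Z(k,\ell)$ count pairs of sequences of distinct indices $(i_1,\ldots,i_\ell), (j_1,\ldots,j_\ell) \subseteq \iii1n$ for which the permuted submatrix $(x_{i_r j_s})_{r,s=1}^\ell$ is a special \klcm; the existence of any such pair gives what the lemma asserts. I would take $k = \lfloor (2+\sqrt 2)\logq n\rfloor - K$ and $\ell = \lfloor (1+\sqrt 2)\logq n\rfloor$ for a large constant $K$; for $n$ sufficiently large both values lie in the intervals prescribed by the lemma, and $\ell > k/2$ so that \eqref{cornu+} applies.

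For the first moment, $\E Z = (n)_\ell^2 \cdot p_0^{\nu_0(k,\ell)}p_1^{\nu_1(k,\ell)}$. The calculation of \refS{Supper} with $k = s\log n$ and $\ell = t\log n$ shows that the coefficient $g(s,t) = 2t - \log Q\cdot(2st - s^2/2 - t^2)$ of $(\log n)^2$ in $\log\E Z$ vanishes at the critical point $(s_*, t_*) = ((2+\sqrt 2)\gamma, (1+\sqrt 2)\gamma)$, where $\partial_t g = 0$ while $\partial_s g = -(2t_* - s_*)/\gamma = -\sqrt 2$. Shifting $k$ below $s_*\log n$ by a constant $K$ thus contributes $+K\sqrt 2 \log n$ to $\log\E Z$ at leading order, giving $\E Z \ge n^{K\sqrt 2 - o(1)}$, which can be made as large as desired.

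The main obstacle is the second moment bound. By symmetry,
\[
\frac{\E(Z^2)}{(\E Z)^2} \;=\; \frac{\E[Z \mid E_0]}{\E Z}
\]
for any fixed embedding $\sigma_0$ giving a corner (with $E_0$ the corresponding event). I would estimate $\E[Z \mid E_0]$ by splitting over $a$ (the number of shared row indices) and $b$ (the number of shared column indices), then within each $(a,b)$ over the alignment of the shared indices inside each sequence. For a given alignment the conditional probability that the second embedding is also a corner is either $0$ (if the two corner patterns impose conflicting values on some shared cell) or $p_0^{\nu_0 - c_0}p_1^{\nu_1 - c_1}$, where $c_0 \le ab$ and $c_1 = O(\log n)$ count shared cells on which the two patterns agree with value $0$, respectively $1$. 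The disjoint term $(a,b) = (0,0)$ yields $(1+o(1))\E Z$ by the elementary estimate $(n-\ell)_\ell / (n)_\ell = 1 - O(\ell^2/n)$. For $(a,b) \ne (0,0)$ the combinatorial cost relative to the disjoint case is of order $(\ell^2/n)^{a+b}$, while the worst-case saving from shared constraints is at most $Q^{ab}n^{O(1)}$. The delicate step, analogous to the second moment arguments for random cliques in \cite{Bollobas, JLR}, is to trade these factors carefully and sum over alignments, showing that each $(a,b) \ne (0,0)$ contributes at most $n^{-\alpha(a+b)}$ for some $\alpha > 0$; the margin $K\sqrt 2$ built into the first moment provides the slack needed to dominate the resulting geometric series when $K$ is chosen large. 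Chebyshev's inequality then gives $\P(Z > 0) \to 1$, proving \refL{LL1}.
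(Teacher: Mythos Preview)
Your first-moment computation is correct, but the second-moment argument has a genuine gap, and it is precisely the point the paper's proof is engineered to get around.

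First, a calibration error: with $k$ shifted below the critical value by only a \emph{constant} $K$, you get $\E Z = n^{K\sqrt2+o(1)}$, which is merely polynomial. But you are counting \emph{permuted} corners, so the term with $a=b=\ell$ (same index sets, arbitrary reorderings) already contributes, via the crude bound $\P(\sigma'\mid\sigma)\le1$, a factor $(\ell!)^2/\E Z$ to $\Var Z/(\E Z)^2$; since $(\ell!)^2=\exp\bigl(\Theta(\log n\log\log n)\bigr)$, this diverges. The lemma allows a shift of order $\eps\logq n$, which would give $\E Z=\exp\bigl(c\eps(\log n)^2\bigr)$; you must use it.

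Second, and more seriously, your asserted inequality ``each $(a,b)\neq(0,0)$ contributes at most $n^{-\alpha(a+b)}$'' is false with the saving bound $Q^{ab}$ you propose. At $a=b=\ell$ the quantity $(\ell^2/n)^{2\ell}Q^{\ell^2}$ has logarithm
\[
-2\ell\log n+\ell^2\log Q+O(\ell\log\ell)
=\bigl[(1+\sqrt2)^2-2(1+\sqrt2)\bigr]\gamma(\log n)^2+o\bigl((\log n)^2\bigr),
\]
and $(1+\sqrt2)^2-2(1+\sqrt2)=\sqrt2-1>0$, so this term tends to $+\infty$. The point is that $ab$ can far exceed $\nu_0(k,\ell)$ (indeed $\ell^2>(3+2\sqrt2)\gamma^2(\log n)^2>\nu_0\approx(2+2\sqrt2)\gamma^2(\log n)^2$), so the bound $c_0\le ab$ is slack; one needs the position-dependent bound on $c_0$, and the resulting sum over overlap \emph{positions} is the hard part you have not addressed.

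The paper's proof circumvents this by a blocking construction: it partitions $[1,n]$ into $L$ (a fixed large constant) intervals and counts only those corners obtained by choosing $q\approx N^{-1}\logq n$ rows and columns from each block, with the corner pattern imposed blockwise. The overlap of two such objects is then described by short vectors $(a_i)_{i\le L},(b_j)_{j\le L}$, and the saving is exactly $Q^{\tau}$ with $\tau=\sum_{i\le j+N}a_ib_j$. The key step is a convexity argument (the Claim in the proof) showing that the contribution $\lambda(A,B)$ is maximised at one of the two extremes $A=B=1$ or $A=B=Lq$, each of which is then bounded directly. This reduction to a finite-dimensional maximisation is exactly what is missing from your sketch; without it, the interpolation between small and full overlap is not controlled.
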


\begin{lemma}
  \label{LL2}
Let $X'$ be the submatrix $(x\ij)_{i>n/2,\,j\le n/2}$ comprising the lower
left quarter of $X$.
Let $\eps>0$ and let $k=k(n)$ and $\ell=\ell(n)$ be such that $k/2<\ell<k$ and
$k-\ell\le(1-\eps)\logq n$.
If $X'$ contains a special \klcsm, then \whp{} $X$ contains a special triangular
$k\times k$ submatrix, and thus $\ts_n\ge k$.
\end{lemma}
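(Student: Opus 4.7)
Let $d = k - \ell$, and suppose the given corner in $X'$ uses rows $\tilde r_1 < \dots < \tilde r_\ell \in (n/2, n]$ and columns $\tilde c_1 < \dots < \tilde c_\ell \in [1, n/2]$. My plan is to adjoin $d$ new rows $s_1 < \dots < s_d$ and $d$ new columns $t_1 < \dots < t_d$ so that the resulting $k \times k$ submatrix of $X$ is special lower triangular. The key geometric observation I will use is that the only way a special $(k, \ell)$-corner can sit inside a special lower triangular $k \times k$ matrix is as its upper-right $\ell \times \ell$ block: the diagonal of $1$'s of the corner is offset from the main diagonal by exactly $d$, forcing this unique placement. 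Consequently the $\tilde r_i$ must occupy the top $\ell$ row-positions of the triangular and the $\tilde c_j$ the right $\ell$ column-positions, which forces $s_p \in (\tilde r_\ell, n]$ and $t_q \in [1, \tilde c_1)$.

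Writing out the triangularity conditions position by position, I will verify that the upper-right $\ell \times \ell$ block is already handled by the corner, the positions strictly below the main diagonal are unconstrained, and the remaining conditions reduce to two independent $d \times d$ special-triangular requirements on disjoint sets of entries:
\begin{equation*}
(x_{\tilde r_p, t_q})_{p, q = 1}^{d}
\quad \text{and} \quad
(x_{s_p, \tilde c_{\ell - d + q}})_{p, q = 1}^{d}
\end{equation*}
must each be a special lower triangular $d \times d$ matrix. These two blocks use entries that are disjoint from each other and from the corner, so conditional on the corner they are independent and retain the original i.i.d.\ distribution; the existence of a valid extension then decouples into two independent sub-problems.

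I then establish existence of suitable $t_q$'s (resp.\ $s_p$'s) by a second moment calculation on each block. The expected number of valid $d$-tuples $t_1 < \dots < t_d$ in $[1, \tilde c_1)$ is $\binom{\tilde c_1 - 1}{d}\, p_0^{d(d-1)/2}\, p_1^d$; using the hypothesis $d \le (1 - \eps)\logq n$ together with the typical bound $\tilde c_1 \gg n^{1 - \eps/2}$ I expect this to be $\exp(\Omega((\log n)^2))$, and a standard variance computation classifying pairs of $d$-tuples by their column-intersection should give at least one valid tuple with probability $1 - o(1)$. The symmetric argument produces the $s_p$'s, and assembling everything gives the required $k \times k$ special triangular. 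The hard part will be handling the boundary hypothesis on $\tilde c_1$ and $n - \tilde r_\ell$: I expect to resolve this by observing that the proof of \refL{LL1} in fact produces \whp{} many corners inside $X'$, so one may select a corner whose extremal indices lie comfortably in the interior, losing only an $o(1)$ probability.
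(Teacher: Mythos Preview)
Your placement of the corner as the upper-right $\ell\times\ell$ block and the reduction of the remaining constraints to two disjoint $d\times d$ special-triangular blocks are correct.

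The gap is in where you look for the new rows and columns, and hence in your independence claim. Since the corner lies in $X'=(x_{ij})_{i>n/2,\,j\le n/2}$, your search intervals $(\tilde r_\ell,n]\subseteq(n/2,n]$ and $[1,\tilde c_1)\subseteq[1,n/2]$, and therefore every extension entry $x_{\tilde r_p,t_q}$ and $x_{s_p,\tilde c_{\ell-d+q}}$, sit inside $X'$ as well. The lemma is proved by conditioning on all of $X'$ and then fixing any corner in it; but once $X'$ is revealed, those extension entries are deterministic, and there is nothing random left for a second-moment argument to act on. Saying ``conditional on the corner'' does not rescue this: the \emph{location} of the chosen corner, and hence your search intervals themselves, are measurable functions of all of $X'$, so you cannot treat the remaining entries of $X'$ as fresh. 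Your boundary worry about $\tilde c_1$ and $n-\tilde r_\ell$ is a symptom of the same issue, and the proposed patch via \refL{LL1} would at best prove a different statement, not the lemma as written.

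The paper's proof removes both the boundary and the dependence problem with one move: it takes the $k-\ell$ new rows from indices $\le n/2$ and the $k-\ell$ new columns from indices $>n/2$, i.e.\ from the quadrants complementary to $X'$. This guarantees $\lfloor n/2\rfloor$ candidate rows and columns irrespective of where the corner sits, and every entry inspected during the extension lies outside $X'$ and is therefore independent of the conditioning. With genuinely fresh i.i.d.\ entries, no second-moment computation is needed: a greedy row-by-row search works, the waiting time for the $r$-th new row being geometric with success probability $p_0^{\,r-1}p_1$, so the total waiting time has mean $O(Q^{k-\ell})=O(n^{1-\eps})=o(n)$ and variance $o(n^2)$, and Chebyshev's inequality finishes. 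You should redirect your search for $s_p$ and $t_q$ to the complementary quadrants in the same spirit.
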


\begin{proof}[Proof of lower bound in \refT{T1}]
Let $0<\eps<1/3$.
Let $X'$ be the lower left quarter of $X$ as in \refL{LL2}.
By \refL{LL1}, there exists $k_1$ and $\ell_1$ with
\begin{align*}
(2+\sqrt 2-\eps)\logq \floor{n/2}&\le k_1\le (2+\sqrt2)\logq \floor{n/2},
\\
(1+\sqrt 2-\eps)\logq \floor{n/2}&\le \ell_1\le (1+\sqrt2)\logq \floor{n/2}
\end{align*}
such that there \whp{} is a special \xxcsm{k_1}{\ell_1} $M_1$ of $X'$.

Note that $k_1-\ell_1\le(1+\eps)\logq n$. Let $d=\ceil{2\eps\logq
n}$, $k=k_1-2d$, and $\ell=\ell_1-d$. By \refL{LL0}, there is a
special \klcsm{} $M_2$ of~$M_1$. It is easily verified that $k$
and $\ell$ satisfy the conditions of \refL{LL2}, and thus
\refL{LL2} shows that \whp{}
\begin{equation*}
 \ts_n\ge k\ge
(2+\sqrt 2-5\eps)\logq n +O(1).
\end{equation*}
The bound $\ts_n\ge (2+\sqrt 2-\eps)\logq n$ \whp{} follows by replacing
$\eps$ by $\eps/6$.
This completes the proof of \refT{T1} since
$\tx_n\ge\ts_n$ by \eqref{tineq}.
\end{proof}

It remains to prove the lemmas.

\begin{proof}[Proof of \refL{LL0}]
  Let $A$ be a special \klcm. The submatrix obtained by deleting the first row
  and last column is a special \xxcm{k-2}{\ell-1}.
Similarly, we obtain a special \xxcm{k-1}{\ell-1} by deleting the last row
and last column, and
a special \xxcm{k}{\ell-1} by deleting the last row
and first column.

The lemma now follows by induction on $\ell_1-\ell_2$.
\end{proof}

\begin{proof}[Proof of \refL{LL1}]
We may assume that $\eps<1/4$.
We consider a block version of \klcmm.

Let $N$ be a large integer and let $K=\ceil{(2+\sqrt 2-\eps) N}$
and $L=\ceil{(1+\sqrt 2-\eps) N}=K-N$; note that $K>L>K/2$. Let
$n_1\=\floor{n/L}$ and divide the interval \iii1{n} into the~$L$
subintervals $E_i\=\iii{(i-1)n_1+1}{in_1}$, $i=1,\dots,L$,
ignoring the possible remainder at the end.
Let $X\ij$ be the $n_1\times n_1$ submatrix $(x_{rs})_{r\in E_i,\,s\in E_j}$
of $X$.

Let
\begin{equation}
  \label{q}
q\=\ceil{N\qw \logq n}
\end{equation}
and consider the submatrices of $X$ obtained by choosing $q$ rows
from each $E_i$ and $q$ columns from each $E_j$, $i,j =1,\dots,L$.
We denote the set of all such submatrices by~$\cM$; each $M\in\cM$
is identified by its set of rows and columns, and the number of
them is thus
\begin{equation}\label{cm}
  |\cM|=\binom{n_1}q^{2L}.
\end{equation}
Each $M$ is a \qa{Lq} submatrix of $X$ which
consists of $L^2$ blocks $M\ij$, $i,j\in\set{1,\dots,L}$, where $M\ij$ is a
\qa{q} submatrix of $X\ij$.

We say that the submatrix $M\in\cM$ is \emph{good} (for a given
realization of the random matrix $X$) if $M\ij=0$ when $i<j+K-L$
and $M\ij=I$ (the \qa{q} identity matrix) when $i=j+K-L$;
otherwise $M$ is called   \emph{bad}. Thus, a good submatrix can
be seen as a special \xxcm{K}{L} of \qa{q} matrices.

Note that a good submatrix $M$ is a special \xxcm{Kq}{Lq}, and that $k=Kq$ and
$\ell=Lq$ satisfy the inequalities in the lemma if $N$ and $q$ are large
enough.
Hence it suffices to show that if $N$ is large enough, then there exists
\whp{} at least one good submatrix $M\in\cM$.

Let $I_M$ be the indicator that $M$ is good, \ie, $I_M\=1$ if $M$ is good and
$I_M\=0$ if $M$ is bad, and let $Z\=\sum_{M\in\cM}I_M$ be the number of good
submatrices $M\in\cM$. Our task is to show that $Z\ge1$ \whp, which we do by
estimating the mean and variance.

In order for $M$ to be good, the number of submatrices $M\ij$
required to be~$0$ is $\nu_0(K,L)$, and the number required to be
$I$ is $\nu_1(K,L)$. Consequently, the number of entries required
to be 0 is $\nu_0(K,L)q^2+\nu_1(K,L)(q^2-q)
=\nu(K,L)q^2-\nu_1(K,L)q$ and the number of entries required to be
1 is $\nu_1(K,L)q$. Hence, denoting the probability that $M$ is
good by $\pi$, for each $M\in\cM$,
\begin{equation}\label{pim}
\pi\=
  \P(I_M=1)
=p_0^{\nu(K,L)q^2-\nu_1(K,L)q} p_1^{\nu_1(K,L)q}
.
\end{equation}
We have by \eqref{cornu+},
recalling $K=L+N$,
\begin{equation}\label{nukl}
  \begin{split}
  \nu(K,L)
&=\frac{4(L+N)L-(L+N)^2-2L^2+O(N)}{2}
\\&
=\frac{L^2+2LN-N^2}{2}+O(N)
\\&
=\Bigpar{2+2\sqrt2-(2+\sqrt2)\eps+\frac{\eps^2}2}N^2+O(N)
<(2-\eps/2)LN   ,
  \end{split}
\end{equation}
provided $N$ is chosen large enough.
We fix such an $N$; thus $K$ and $L$ are now fixed, while \ntoo.
By \eqref{q},
\begin{equation}\label{erm}
\log n=\logq n\cdot \log Q=Nq\log Q+O(1).
\end{equation}
Furthermore,  \eqref{q} also yields,
as $\ntoo$,
$q\le\logq n\ll n_1$. Hence, by Stirling's formula,
\begin{equation*}
\log  \binom{n_1}q
= q\log n_1 + O\Bigparfrac{q^2}{n_1}-\log(q!)
= q\log n + O(q\log q).
\end{equation*}
Consequently, by \eqref{cm},  \eqref{pim}, \eqref{nukl} and \eqref{erm},
\begin{equation}\label{ez}
  \begin{split}
  \E Z&
= |\cM|\P(I_M=1)
= |\cM|\pi
\\&
=\exp\Bigpar{2L\bigpar{q\log n + O(q\log q)}-\nu(K,L)q^2\log Q+O(q)}
\\&
\ge\exp\Bigpar{2L\bigpar{q\log n}-(2-\eps/2)LN q^2\log Q+O(q\log q)}
\\&
=\exp\Bigpar{(\eps LN \log Q/2) q^2+O(q\log q)}
\to\infty
    .
  \end{split}
\end{equation}

To estimate the variance $\Var(Z)$, we first calculate the
covariance $\Cov(I_M,\imx)=\E(I_M\imx)-\E(I_M)\E(\imx)$ for two
submatrices $M,M'\in\cM$. Let $a_i$ be the number of common rows
in $E_i$ of $M$ and $M'$,
and let $b_j$ be the number of common columns in $E_j$. 
Then $M\ij$ has $a_ib_j$ entries in common with $M'\ij$, so their union
has $2q^2-a_ib_j$ elements.

For $i<j+K-L$, we have
\begin{equation}\label{mm0}
  \frac{\P(M\ij=0=M'\ij)}{\P(M\ij=0)\P(M'\ij=0)}
=\frac{p_0^{2q^2-a_ib_j}}{p_0^{2q^2}} = {p_0^{-a_ib_j}}.
\end{equation}
For $i=j+K-L$, we want $M\ij=M\ij'=I$, so we have to consider also
the required positions of the 1's in~$M\ij$ and $M'\ij$. In many
cases, the rows and columns chosen for~$M\ij$ and $M'\ij$ are such
that the conditions $M\ij=I$ and $M'\ij=I$ are contradictory, so
$\P(M\ij=M'\ij=I)=0$. Otherwise, the $a_ib_j$ common entries of
$M\ij$ and $M'\ij$ contain some number of entries, $d$ say, that
have to be 1 in both $M\ij$ and $M'\ij$, while the remaining
$a_ib_j-d$ have to be~$0$ in both, and then
\begin{equation}\label{mm1}
  \frac{\P(M\ij=M'\ij=I)}{\P(M\ij=I)\P(M'\ij=I)}
= {p_0^{-(a_ib_j-d)}}p_1^{-d}
= {p_0^{-a_ib_j}}\Bigparfrac{p_0}{p_1}^d;
\end{equation}
note that $0\le d\le \min(a_i,b_j)$.
Combining \eqref{mm0} and \eqref{mm1} by taking the product over all pairs
$(i,j)$ with $i\le j+K-L$,
and recalling that $K-L=N$,
we obtain the upper bound
\begin{equation}\label{sjw}
  \frac{\P(I_M=\imx=1)}{\P(I_M=1)\P(\imx=1)}
\le  {p_0^{-\sum_{i,j:i\le j+N}a_ib_j}}
\max\Bigpar{\Bigparfrac{p_0}{p_1}^{L\sum_i a_i},1}.
\end{equation}
Let $\pi\=\P(I_M=1)$, 
$\CC\=\max\set{(p_0/p_1)^L,1}\CCdef\CCcov$ and, for a given pair
$M,M'$, $A\=\sum_i a_i$ and $B\=\sum_j b_j$, be the numbers of
common rows and columns,
  respectively, of~$M$ and~$M'$.
Then \eqref{sjw} yields
\begin{equation}\label{jb}
\Cov(I_M,\imx)\le
  \P(I_M=\imx=1)
\le  {Q^{\sum_{i,j:i\le j+N}a_ib_j}} \CCcov^A
\pi^2.
\end{equation}

Let
\begin{equation}
  \label{tau}
\tau=\tau\bigpar{(a_i),(b_j)}\=\sum_{i,j:i\le j+N}a_ib_j
\end{equation}
and let $\tau(A,B)$ be the maximum of $\tau$ for given sums
$A=\sum_ia_i$ and $B=\sum_jb_j$, with $a_i,b_j\in\iii0q$. If
$i_1<i_2$ and we increase $a_{i_1}$ by some $\gD$ to $a_{i_1}+\gD$
and decrease $a_{i_2}$ by the same $\gD$ to $a_{i_2}-\gD$, then
$\tau\=\sum_{i,j:i\le j+N}a_ib_j$ cannot decrease. The same
happens if we decrease $b_{j_1}$ and increase $b_{j_2}$ with
$j_1<j_2$. Consequently, given $A$ and $B$, the sum~$\tau$ is
maximized when, for some indices $i_*,j_*\in\iii 1L$,
\begin{align}\label{amax}
  a_i&=q \text{ when } i<i_*, &
  a_i&=0 \text{ when } i>i_*;
\\
\label{bmax}
  b_j&=0 \text{ when } j<j_*, &
  b_j&=q \text{ when } j>j_*.
\end{align}

Returning to \eqref{jb}, we have the estimate
$\Cov(I_M,\imx)\le Q^{\tau(A,B)}\CCcov^A\pi^2$.
If $A=0$ or if $B=0$, then $M$ and $M'$ are disjoint submatrices of $X$,
and thus independent, so in this case $\Cov(I_M,\imx)=0$.
Consequently,
\begin{equation}\label{pi}
\Var(Z)=
\sum_{M,M'}  \Cov(I_M,\imx)
\le \sum_{M,M':A,B>0}
Q^{\tau(A,B)} \CCcov^A \pi^2,
\end{equation}
where $A$ and $B$ are defined as above, given $M$ and $M'$.

For a given $M\in\cM$, the number of submatrices $M'\in\cM$ with
given $a_1,\dots,a_L$, $b_1,\dots,b_L$ is
\begin{equation*}
  N\bigpar{(a_i)_i,(b_j);q}
=
\prod_{i=1}^L\binom{q}{a_i}\binom{n_1-q}{q-a_i}
\prod_{j=1}^L\binom{q}{b_j}\binom{n_1-q}{q-b_j}.
\end{equation*}
We have, for any $a\in\iii0q$,
\begin{equation}\label{lou}
  \frac{\binom{q}{a}\binom{n_1-q}{q-a}}{\binom{n_1}q}
\le
  \frac{{q}^{a}\binom{n_1-a}{q-a}}{\binom{n_1}q}
=q^a\prod_{i=0}^{a-1}\frac{q-i}{n_1-i}
\le q^a\Bigparfrac{q}{n_1}^a
=\Bigparfrac{q^2}{n_1}^a
.
\end{equation}
Thus, recalling \eqref{cm},
\begin{equation*}
\frac{  N\bigpar{(a_i)_i,(b_j);q}}{|\cM|}
\le
\Bigparfrac{q^{2}}{n_1}^{A+B}.
\end{equation*}
Moreover, given $A$ and $B$, the number of choices of $a_1,\dots,a_L$
with sum $A$ is $\le(A+1)^L\le2^{AL}$, and similarly the number of
$b_1,\dots,b_L$ is $\le 2^{BL}$.
Hence, for each $M\in\cM$, the number of $M'$ with given $A$ and $B$ is at
most, using \eqref{lou},
\begin{equation*}
2^{AL}2^{BL}\Bigparfrac{q^{2}}{n_1}^{A+B} |\cM|
=
\Bigparfrac{2^Lq^{2}}{n_1}^{A+B} |\cM|
\le
\Bigparfrac{\CC q^{2}}{n}^{A+B} |\cM|,
\CCdef\CCsw
\end{equation*}
where $\CCx=(L+1)2^L$ (for $n$ large enough).
Since $M$ can be chosen in $|\cM|$ ways, and $A,B\le Lq$, \eqref{pi} yields,
recalling $\E Z=|\cM|\pi$,
\begin{equation}\label{varz}
  \begin{split}
\Var(Z)&\le
\sum_{A,B=1}^{Lq} |\cM|
\Bigparfrac{\CCx q^{2}}{n}^{A+B} |\cM|
Q^{\tau(A,B)}\CCcov^{A+B}\pi^2
\\&=
(\E Z)^2
\sum_{A,B=1}^{Lq}
\Bigparfrac{\CC q^{2}}{n}^{A+B}
Q^{\tau(A,B)},
  \end{split}
\end{equation}
with $\CCx=\CCcov\CCsw$.
We write \eqref{varz} as
$\Var(Z)=(\E Z)^2\sum_{A,B}\gl(A,B)$, with
\begin{equation}
  \label{gl}
\gl(A,B)\=\Bigparfrac{\CCx q^{2}}{n}^{A+B} Q^{\tau(A,B)}.
\end{equation}

\begin{claim}
If $A,B\in\iii1{Lq}$, then
$\gl(A,B)\le\max\set{\gl(1,1),\gl(Lq,Lq)}$; in other words,
$\gl(A,B)$ attains its maximum for $A=B=1$ or $A=B=Lq$.
\end{claim}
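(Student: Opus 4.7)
The plan is to make the extremal $\tau(A,B)$ explicit using \eqref{amax}--\eqref{bmax}, then analyze $\log\gl(A,B)=c(A+B)+d\,\tau(A,B)$ piecewise on $[1,Lq]^2$, where $c\=\log(Cq^2/n)<0$ (with $C$ the constant from \eqref{gl}) and $d\=\log Q>0$.

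Evaluating $\tau$ at the extremal configuration yields (to leading order, absorbing $O(A+B)$ integer-rounding corrections into the main argument)
\[
\tau(A,B)=\begin{cases} AB, & A+B\le qK,\\ qKB-\tfrac12 B^2-\tfrac12(qK-A)^2, & A+B\ge qK,\end{cases}
\]
the two branches agreeing on the line $A+B=qK$.

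In the \emph{concave region} $A+B\ge qK$, expanding the square gives $\log\gl=(c+dqK)(A+B)-\tfrac d2(A^2+B^2)-\tfrac d2q^2K^2$, a concave quadratic with Hessian $-dI$. Its unconstrained maximizer is $A=B=qK+c/d$, and the ceiling in \eqref{q} yields $qN\log Q\ge\log n$, so $|c|/d<qN$ and the critical point lies above the square $[1,Lq]^2$. Moreover $c+dNq=\log C+2\log q+O(1)>0$ by \eqref{erm}, so $\nabla\log\gl(Lq,Lq)$ has both components positive; since any displacement $(A'-Lq,B'-Lq)$ into the domain has non-positive sum, concavity gives $\log\gl(A',B')\le\log\gl(Lq,Lq)$ throughout this region.

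In the \emph{saddle region} $A+B\le qK$, the Hessian $\smatrixx{0&d\\d&0}$ is indefinite, so $\log\gl$ is linear in each variable separately and its maximum on the pentagonal domain is attained at a vertex or along the diagonal edge $A+B=qK$. On the axes $A=1$ or $B=1$ the slope $c+d<0$ pushes the maximum to $(1,1)$. On the diagonal edge, substitution gives the downward parabola $cqK+dA(qK-A)$, maximized at $A=qK/2\in[Nq,Lq]$ (since $L>K/2>N$). A direct computation using $K=L+N$ yields
\[
\log\gl(qK/2,qK/2)-\log\gl(Lq,Lq)=-(L-N)q\Bigl[c+\tfrac{dq(L+3N)}{4}\Bigr];
\]
by \eqref{erm} the bracket equals $\tfrac{L-N}{4N}\log n+O(\log q)$, which is positive for large $n$ because $L>N$, so this difference is $\le 0$.

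Combining both regions, $\log\gl$ is maximized on $[1,Lq]^2$ at $(1,1)$ or $(Lq,Lq)$, as claimed. The main technical obstacle is the piecewise geometry of $\tau$ and keeping track of which edge of which region is the binding constraint; once the explicit form above is in hand, every inequality reduces to a sign check using \eqref{erm} and $L-N>0$.
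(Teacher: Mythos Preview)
Your piecewise-explicit approach is quite different from the paper's, and the saddle-region half is essentially sound: there $\tau(A,B)\le AB$ holds exactly, so your upper bound $c(A+B)+dAB$ is valid, and your comparison of the diagonal maximum to $\log\lambda(Lq,Lq)$ even improves when one replaces the continuous $\tau_c(Lq,Lq)$ by the (larger) true value. The gap is in the concave region. Your claim that the rounding corrections are $O(A+B)$ is false: at $(Lq,Lq)$ one has $A+B=2Lq=O(q)$, while the discrepancy between the lattice-point count $\nu(K,L)$ and your area formula is $(L-N)/2$, so $\tau-\tau_c=q^2(L-N)/2=\Theta(q^2)$. Since the discrete $\tau$ is not known to be concave, the concavity of your continuous surrogate does not transfer; your argument therefore delivers only $\log\lambda(A',B')\le\log\lambda(Lq,Lq)+O(q^2)$, not the exact inequality the Claim asserts. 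Nor can the slack from concavity absorb this: the first-order margin $(c+dNq)(2Lq-A'-B')$ is only $O(r\log q)$ for displacements of size $r$, so near $(Lq,Lq)$ it is swamped by the $\Theta(q^2)$ rounding error.

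The paper avoids all approximation with a short exact argument. If $A<Nq$, then in the extremal configuration \eqref{amax} every nonzero $a_i$ has $i\le N\le j+N$, so $\tau(A,B)=AB$ \emph{exactly} and $\log\lambda$ is affine in $B$; hence for fixed $A$ the maximum in $B$ is at an endpoint. If $A\ge Nq$, then $a_1=\dots=a_N=q$, and increasing any $b_j$ by $1$ increases $\tau$ by $\sum_{i\le j+N}a_i\ge Nq$, giving the exact ratio bound $\lambda(A,B+1)/\lambda(A,B)\ge(Cq^2/n)Q^{Nq}>1$; thus $\lambda(A,B)\le\lambda(A,Lq)$. By symmetry the maximum lies in $\{1,Lq\}^2$, and the case $A\ge Nq$ again disposes of $(Lq,1)$. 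This monotonicity step is precisely what your concave-region argument is missing; plugging it in would repair your proof, but at that point it has become the paper's proof.
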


To prove the claim, let $(a_i)$ and $(b_j)$ be vectors that maximize $\tau$
in \eqref{tau} for some given $A$ and $B$; we may thus assume that
\eqref{amax} and \eqref{bmax} hold.
We first note that if $A< Nq$, then by \eqref{amax} we
have $i_*\le N$ and $a_i=0$ when $i>N$; hence
\begin{equation*}
 \tau(A,B)= \tau=\sum_{i,j:i\le j+N} a_ib_j
= \sum_{i,j=1}^L a_ib_j =AB
\end{equation*}
and thus
\begin{equation*}
  \gl(A,B)=
(\CCx q^{2}/{n})^{A+B} Q^{AB}.
\end{equation*}
Keeping $A$ fixed, this is maximized by either $B=1$ or $B=Lq$.

On the other hand, if $A\ge Nq$, then \eqref{amax} yields $a_i=q$ when $i\le
N$. Hence, increasing any $b_j$ by 1 will increase $\tau$ in \eqref{tau} by
$\sum_{i:i\le j+N}a_i\ge Nq$, and thus $\tau(A,B+1)\ge \tau(A,B)+Nq$.
Consequently, by \eqref{gl} and \eqref{q},
\begin{equation*}
  \frac{\gl(A,B+1)}{\gl(A,B)} =
\Bigparfrac{\CCx q^{2}}{n} Q^{\tau(A,B+1)-\tau(A,B)}
\ge \Bigparfrac{\CCx q^{2}}{n} Q^{Nq}
\ge \CCx q^2
>1,
\end{equation*}
and thus $\gl(A,B)\le \gl(A,Lq)$ for any $B\le Lq$.

Hence, for any fixed $A\le Lq$, $\gl(A,B)$ is maximized by either $B=1$ or
$B=Lq$. By symmetry, for fixed $B$, the maximum is attained for $A=1$ or
$A=Lq$. Consequently, the maximum for all $A,B\in\iii1{Lq}$ is attained for
$A,B\in\set{1,Lq}$. Moreover, $\gl(1,Lq)=\gl(Lq,1)$ by symmetry and
$\gl(Lq,1)\le\gl(Lq,Lq)$ by the case $A\ge Nq$ above, and the claim follows.

We calculate easily the two extreme cases. For $A=B=1$, $\tau(1,1)=1$ and
\begin{equation}\label{d1}
  \gl(1,1)=
\Bigparfrac{\CCx q^{2}}{n}^2 Q
=O\Bigparfrac{\log^{4}n}{n^2}.
\end{equation}
For $A=B=Lq$, all $a_i=b_j=q$, and thus $\tau(Lq,Lq)=\nu(K,L)q^2$.
Hence, recalling $q=O(\log n)$, \eqref{erm} and \eqref{nukl},
\begin{equation}\label{d2}
  \begin{split}
  \gl(Lq,Lq)&=
\Bigparfrac{\CCx q^{2}}{n}^{2Lq} Q^{\nu(K,L)q^2}
\\&
=\exp\Bigpar{-2Lq\log n+ O(q\log q)+\nu(K,L)q^2\log Q}
\\&
=\exp\Bigpar{(-2LNq^2+\nu(K,L)q^2)\log Q+O(q\log q)}
\\&
\le
\exp\Bigpar{-(\eps LN \log Q/2)q^2+ O(q\log q)}.
  \end{split}
\end{equation}
For large $n$, this is less than $\exp(-2Nq)<n\qww$.
Consequently, the claim and  \eqref{d1}--\eqref{d2} shows that for all
$A,B\le Lq$,
\begin{equation}\label{glfin}
  \gl(A,B)= O\Bigparfrac{\log^{4}n}{n^2}.
\end{equation}
Finally,  by \eqref{varz} and \eqref{glfin},
\begin{equation}\label{zgood}
  \frac{\Var(Z)}{(\E Z)^2}
\le \sum_{A,B=1}^{Lq}\gl(A,B)
=O\Bigparfrac{q^2\log^{4}n}{n^2}
=O\Bigparfrac{\log^{6}n}{n^2}=o(1),
\end{equation}
as \ntoo. This is what we need: by Chebyshev's inequality
\begin{equation*}
\P(Z=0)\le \frac{\Var(Z)}{(\E Z)^2};
\end{equation*}
hence \eqref{zgood} yields
$\P(Z=0)\to0$, and thus $Z\ge1$ \whp, which completes the proof.
\end{proof}

\begin{proof}[Proof of \refL{LL2}]
Condition on $X'$ and
fix a special \klcsm{} $M'=(x_{i'_r,j'_s})_{r,s=1}^\ell$ of $X'$; thus
$n/2<i'_1<\dots<i'_\ell\le n$ and
$1\le j'_1<\dots<j'_\ell\le n/2$.
We try to complete $M'$ to a $k\times k$ special triangular matrix by
adding $k-\ell$ rows $i_1<\dots<i_{k-\ell}\le n/2$ and
$k-\ell$ columns $n/2<j_1<\dots<j_{k-\ell}\le n$; we do this by trying the
rows one by one until we find first a suitable $i_1$
(i.e., one with $x_{i_1j'_1}=1$),
then a suitable $i_2$
(one with $x_{i_2j'_1}=0$ and $x_{i_2j'_2}=1$), and so on until
$i_{k-\ell}$, and similarly for $j_1,\dots,j_{k-\ell}$.

Let $r\le k-\ell$.
Each time we try a row in order to find $i_r$, we want one specific entry
in it to be 1 and $r-1$ others to be 0; the probability of this is
$\pi_r\=p_0^{r-1}p_1$, independently of $X'$ and what has happened earlier.
If $T_r$ is the number of rows that we have to try until we find $i_r$,
then $T_r$ thus has a geometric distribution
\begin{equation*}
  \P(T_r=t)=(1-\pi_r)^{t-1}\pi_r,
\qquad t=1,2,\dots.
\end{equation*}
This distribution has mean $\E T_r= 1/\pi_r$ and variance
$\Var T_r= (1-\pi_r)/\pi_r^2$;
hence the sum $S:=T_1+\dots+T_{k-\ell}$ has mean
\begin{align*}
  \E S&
=\sum_{r=1}^{k-\ell}\E T_r
=\sum_{r=1}^{k-\ell}\pi_r\qw
=\sum_{r=1}^{k-\ell} p_1\qw Q^{r-1}
=O\bigpar{Q^{k-\ell}}
= O\bigpar{n^{1-\eps}} = o(n)
\intertext{and variance}
 \Var S&
=\sum_{r=1}^{k-\ell}\Var T_r
\le \sum_{r=1}^{k-\ell}\pi_r\qww
=O\bigpar{Q^{2(k-\ell)}}
= O\bigpar{n^{2(1-\eps)}} = o(n^2).
\end{align*}
The search for $i_1,\dots,i_{k-\ell}$ succeeds if $S\le n/2$. Consequently
the probability of failure is, using Chebyshev's inequality, for $n$ so
large that $\E S<n/4$,
\begin{equation*}
  \begin{split}
\P(S>n/2)
\le \frac{\Var S}{(n/2-\E S)^2}
\le \frac{\Var S}{(n/4)^2}
=o(1).
  \end{split}
\end{equation*}
Hence, \whp{} we succeed and find suitable rows $i_1,\dots,i_{k-\ell}$;
similarly \whp{} we find also suitable columns
$j_1,\dots,j_{k-\ell}$, and we can extend $M'$ to a special triangular
$k\times k$ matrix.
\end{proof}

Note that \whp{} $S$ is much less than $n/2$, so we have a wide margin in
this proof and there are \whp{} many different choices of rows and columns
that work, and thus many different ways to extend $M'$ to a special
triangular matrix, \cf{} \refR{Rtribad}.

\newcommand\AAP{\emph{Adv. Appl. Probab.} }
\newcommand\JAP{\emph{J. Appl. Probab.} }
\newcommand\JAMS{\emph{J. \AMS} }
\newcommand\MAMS{\emph{Memoirs \AMS} }
\newcommand\PAMS{\emph{Proc. \AMS} }
\newcommand\TAMS{\emph{Trans. \AMS} }
\newcommand\AnnMS{\emph{Ann. Math. Statist.} }
\newcommand\AnnPr{\emph{Ann. Probab.} }
\newcommand\CPC{\emph{Combin. Probab. Comput.} }
\newcommand\JMAA{\emph{J. Math. Anal. Appl.} }
\newcommand\RSA{\emph{Random Struct. Alg.} }
\newcommand\ZW{\emph{Z. Wahrsch. Verw. Gebiete} }
\newcommand\DMTCS{\jour{Discr. Math. Theor. Comput. Sci.} }

\newcommand\AMS{Amer. Math. Soc.}
\newcommand\Springer{Springer-Verlag}
\newcommand\Wiley{Wiley}

\newcommand\vol{\textbf}
\newcommand\jour{\emph}
\newcommand\book{\emph}
\newcommand\inbook{\emph}
\def\no#1#2,{\unskip#2, no. #1,} 
\newcommand\toappear{\unskip, to appear}

\newcommand\urlsvante{\url{http://www.math.uu.se/~svante/papers/}}
\newcommand\arxiv[1]{\url{arXiv:#1.}}

\def\nobibitem#1\par{}

\end{document}